\theoremstyle{plain}
\newtheorem{Thm}{Theorem}
\newtheorem{Cor}[Thm]{Corollary}
\newtheorem{Lem}[Thm]{Lemma}
\theoremstyle{definition}
\newtheorem{Expl}[Thm]{Example}
\theoremstyle{Remark}
\numberwithin{equation}{section}
\title{Remarks on the cone of divisors}
\author{Yujiro Kawamata}
\begin{document}

\maketitle

\begin{abstract}
We prove two theorems on the locally finite decompositions 
of the cones of divisors by the cones which correspond 
to canonical and minimal models.  
We introduce the concept of the numerical linear systems 
in order to simplify the argument on the Zariski decompositions.
\end{abstract}

\section{Introduction}

In the Minimal Model Program (MMP), a contraction morphism 
arises from an extremal ray on a {\em cone of curves}, a subset of a vector
space $N_1(X/S)$ consisting of numerical classes of effective $1$-cycles.
This is like a homology theory.
According to the later development of the MMP, 
it turned out that the subsets of the dual vector space $N^1(X/S)$, the 
vector space of divisors, are more useful.
This is like a cohomology theory.
In this paper we shall make some remarks on the finiteness properties
on subsets of $N^1(X/S)$.

Let $f: X \to S$ be a projective morphism between algebraic schemes.
Two $\mathbf{R}$-Cartier divisors $D_1$ and $D_2$, linear combinations
of Cartier divisors with real coefficients, on $X$ are said to be 
{\em numerically equivalent} over $S$, and denoted by $D_1 \equiv_S D_2$, 
or simply $D_1 \equiv D_2$,
if equalities $(D_1 \cdot C) = (D_2 \cdot C)$ hold
for all curves $C$ on $X$ which are mapped to points on $S$.
The set of all the numerical classes of $\mathbf{R}$-Cartier divisors
form a finite dimensional real vector space $N^1(X/S)$.
We consider the following inclusions of convex cones inside $N^1(X/S)$:
\[
\begin{CD}
\text{Amp}(X/S) @>>> \text{Nef}(X/S) \\
@VVV @VVV \\
\text{Big}(X/S) @>>> \text{Psef}(X/S)
\end{CD}
\]
where the {\em ample cone} $\text{Amp}(X/S)$ is the open convex cone
generated by the classes of Cartier divisors which are ample over $S$, 
the {\em nef cone} $\text{Nef}(X/S)$ is its closure,
the {\em pseudo-effective cone} $\text{Psef}(X/S)$ is the 
closed convex cone generated by the classes of effective Cartier divisors, 
and the {\em big cone} $\text{Big}(X/S)$ is its interior (cf. \cite{crep}).

We shall prove two theorems on the locally finite decompositions of 
these cones in \S 4 which are determined respectively by the canonical and 
minimal models (Theorems \ref{cano} and \ref{mini}).
The assertions of the theorems are basically contained 
in a paper by Shokurov \cite{Shokurov}.
But we make the statements more precise, 
especially in the second theorem.
We considered a partial decomposition of the vector space 
$N^1(X/S)$ by nef cones of various birationally equivalent 
minimal models in \cite{crep}.
That was a locally finite decomposition.
Shokurov's idea is to consider the space of boundary divisors themselves
instead of their numerical classes, so that
the assertions of theorems become not only local finiteness but also
global finiteness.
We treat log pairs whose boundaries are not necessarily big, but we assume that
there exist minimal and canonical models of the pairs (cf. \cite{BCHM}).
We include examples at the end of the paper 
on the whole space $N^1(X/S)$ where 
the finiteness holds only locally.

We introduce the concept of the numerical linear systems in \S 3 
which replaces that of the $\mathbf{R}$-linear systems in \cite{BCHM}
in order to simplify the argument on the Zariski decompositions.
We identify the exceptional divisors of the birational map to a 
minimal model as the numerical fixed part of the numerical canonical system 
(Lemma~\ref{exceptional}).
This lemma is used to prove the finite dcomposition theorem 
according to the choices of minimal models (Theorem~\ref{mini}).


\section{Minimal and canonical models}

The purpose of this is section is to fix the notation and the terminology.
The MMP deals with pairs $(X,B)$ consisting of normal 
varieties and $\mathbf{R}$-divisors.
Let $\mu: Y \to X$ be a log resolution of the pair $(X,B)$,
i.e., $\mu$ is a birational morphism from a smooth variety $Y$
such that the inverse image of $B$ 
and the exceptional locus of $\mu$ 
as well as their union are normal crossing divisors.
We can write $\mu^*(K_X+B) = K_Y+C$ with irreducible decomposition 
$C = \sum_j c_jC_j$.
Then the pair is said to be {\em LC} (resp. {\em KLT}) if 
$c_j \le 1$ (resp. $c_j < 1$) for all $j$.
This definition does not depend on the choice of $\mu$.
The pair is called {\em DLT} if there exists a log resolution $\mu$ such that 
$c_j \le 1$ for all $j$ with strict inequalities when 
$\text{codim }\mu(C_j) \ge 2$.  

The cone theorem (\cite{KMM}) can be stated in the following way:

\begin{Thm}
Let $f: X \to S$ be a projective morphism from a DLT pair $(X,B)$.
Then the cone of nef divisors $\text{Nef}(X/S)$ in $N^1(X/S)$ 
looks locally rational
polyhedral when observed from the numerical class of $K_X+B$ in the 
following sense: let $V$ be the part of the boundary 
$\partial \text{Nef}(X/S)$ which is visible from the point
$v_0 = [K_X+B]$
\[
V = \{v \in \partial \text{Nef}(X/S) \,\vert\, 
[v,v_0] \cap \text{Nef}(X/S) = \{v\}\}
\]
then any compact subset in the relative interior of $V$ 
consists of finitely
many faces which are defined by linear equations with rational coefficients. 
\end{Thm}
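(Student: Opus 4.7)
The idea is to dualize the standard cone theorem on the cone of curves $\overline{NE}(X/S)$ and identify the visible locus with a union of rational codimension-one faces. Set $v_0 := [K_X+B]$; we may assume $v_0 \notin \text{Nef}(X/S)$, otherwise $V = \emptyset$ by convexity. By the cone theorem of \cite{KMM}, there are countably many extremal rays $R_i = \mathbf{R}_{\ge 0}[C_i]$ of $\overline{NE}(X/S)$ with $(K_X+B)\cdot C_i < 0$, spanned by rational curves (rationality theorem) and having no accumulation point in the open half-space $(K_X+B) < 0$. Each such $R_i$ is dual to a codimension-one face $F_i := R_i^\perp \cap \text{Nef}(X/S)$ of the nef cone, defined by the rational linear equation $(\,\cdot\,,C_i) = 0$.

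Next I would establish
\[
V = \bigcup_i F_i.
\]
For $\supseteq$: if $v \in F_i$, then $(tv_0+(1-t)v)\cdot C_i = t\,(K_X+B)\cdot C_i < 0$ for all $t \in (0,1]$, so every interior point of the segment $[v,v_0]$ fails to be nef. For $\subseteq$: a visible $v \in V$ admits a supporting hyperplane of $\text{Nef}(X/S)$ through $v$ that strictly separates $v_0$, corresponding to a class $\gamma \in \overline{NE}(X/S)$ with $v\cdot \gamma = 0$ and $(K_X+B)\cdot \gamma < 0$. In the open half-space $(K_X+B) < 0$ the cone theorem makes $\overline{NE}(X/S)$ rationally polyhedral, so $\gamma$ decomposes into finitely many $(K_X+B)$-negative extremal contributions; at least one such contribution $R_i$ must satisfy $v\cdot R_i = 0$, placing $v \in F_i$.

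Finally, given a compact $K$ contained in the relative interior of $V$, I would show that only finitely many $F_i$ meet $K$. Fix an ample $H$ and normalize the $C_i$ by $H \cdot C_i = 1$. The crux is the uniform bound: there exists $\epsilon > 0$ with $(K_X+B)\cdot C_i \le -\epsilon$ whenever $F_i \cap K \neq \emptyset$. Given this, finiteness is immediate from the no-accumulation part of the cone theorem, and the rational polyhedrality of each face is already built in. The uniform bound is where the main obstacle lies: if it failed, a sequence $v_n \to v_\infty \in K$ with $v_n \in F_{R_n}$ would produce, after normalization and passage to a subsequence, a limit class $R_\infty \in \overline{NE}(X/S)$ with $v_\infty \cdot R_\infty = 0$ and $(K_X+B) \cdot R_\infty = 0$. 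But then the entire segment $[v_\infty, v_0]$ lies in the hyperplane $R_\infty^\perp$, hence in a proper face of $\text{Nef}(X/S)$, contradicting $v_\infty$ being in the relative interior of $V$, where a whole neighborhood in $V$ must open up transversally to $[v_\infty,v_0]$. Handling this boundary-of-the-boundary behavior, rather than the duality itself, is what I expect to require the most care.
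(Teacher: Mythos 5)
The paper does not actually prove this statement: it is presented as a reformulation of the cone theorem and attributed to \cite{KMM}, with no argument given, so your proposal has to be judged on its own. Your overall strategy --- dualize the $(K_X+B)$-negative extremal rays $R_i$ to rational faces $F_i=R_i^{\perp}\cap\text{Nef}(X/S)$, show the visible locus is covered by the $F_i$, and use local discreteness of the rays to get finiteness over a compact set --- is the standard and correct route, and your $\supseteq$ inclusion is fine. But there are two genuine gaps. First, in the $\subseteq$ direction you assert that a visible point admits a supporting hyperplane \emph{strictly} separating $v_0$; this is false for general convex bodies (a tangent line to a disk through an exterior point meets the disk in a single visible point, yet the unique supporting functional there takes the same value at the exterior point), so it cannot be invoked as a convexity fact. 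It can be repaired --- apply duality to each interior point $v_t$ of the segment, decompose the resulting class via the cone theorem, and let $t\downarrow 0$ --- but the repair lands you exactly at the same degenerate limit you face in your last step.

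Second, and more seriously, the crux you yourself flag is asserted rather than proved. From the failure of the uniform bound you correctly extract a nonzero class $\gamma_\infty\in\overline{NE}(X/S)$ with $v_\infty\cdot\gamma_\infty=0$ and $v_0\cdot\gamma_\infty=0$, but the claimed contradiction is garbled: the segment $[v_\infty,v_0]$ does not lie in $\text{Nef}(X/S)$ at all, so it does not lie ``in a proper face of $\text{Nef}(X/S)$,'' and ``a whole neighborhood in $V$ must open up transversally'' is a restatement of what needs proof, not an argument. What you actually need is: at a point $v$ of the relative interior of $V$, every nonzero $\gamma\in\overline{NE}(X/S)$ with $\gamma\cdot v=0$ satisfies $\gamma\cdot v_0<0$ (equivalently, $v-v_0$ lies in the interior of the tangent cone of $\text{Nef}(X/S)$ at $v$); the uniform $\epsilon$ then follows from compactness of $K$ and of the normalized normal cones. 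This is plausible --- for a round cone the locus where $\gamma\cdot v=\gamma\cdot v_0=0$ is precisely the edge of the visible region --- but it is the entire content of the difficulty and must be argued, e.g.\ by exhibiting near any such $v$ boundary points displaced in the direction $v-v_0$ that fail to be visible, or by instead invoking the length bound $0<-(K_X+B)\cdot C_i\le 2\dim X$ from \cite{KMM} to bound $H\cdot C_i$ and conclude that only finitely many integral classes $C_i$ can occur. As written, the proposal makes the right reduction but leaves its key step unestablished.
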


Let $(X,B)$ be an LC pair consisting of a normal $\mathbf{Q}$-factorial 
variety and an $\mathbf{R}$-divisor
with a projective morphism $f: X \to S$ to a 
base space.
We assume an additional condition that there exists another boundary 
$\bar B$ such that $(X,\bar B)$ is a KLT pair.
Then the MMP or the MMP with scaling for the morphism 
$f: (X,B) \to S$ works and preserves 
the situation in the sense that the cone and contraction theorems hold and 
the resulting morphism after a divisorial contraction or a flip 
satisfies the same condition.
The existence of flips is proved in \cite{HM}, but the termination of flips 
in general is not yet.

A {\em minimal model} for a projective morphism $f: (X,B) \to S$ 
is defined to be a projective morphism $g: (Y,C) \to S$ 
with a birational map $\alpha: X \dashrightarrow Y$ over $S$
which satisfies the following conditions:

\begin{enumerate}
\item $\alpha$ is surjective in codimension one, $C = \alpha_*B$ is the
strict transform, and $(Y,C)$ is a $\mathbf{Q}$-factorial LC pair.

\item $K_Y+C$ is nef over $S$.

\item Any exceptional prime divisor for $\alpha$ has reason to be contracted in
the sense that it has positive coefficient
in the difference of $K_X+B$ and $K_Y+C$.
More precisely, if $p: V \to X$ and $q: V \to Y$ are common resolutions 
such that $q=\alpha \circ p$, then 
$p^*(K_X+B)-q^*(K_Y+C)$ has positive coefficient at the strict transform 
of an arbitrary prime divisor on $X$ whose image on $Y$ has higher codimension.
\end{enumerate}

The condition 3 is referred to as the {\em negativity} of $K_X+B$ for 
$\alpha$.

If a minimal model exists, then $K_X+B$ is pseudo-effective.
The converse, the existence of a minimal model, is still a conjecture.
The conjecture is proved to be true if the boundary $B$ is 
big in \cite{BCHM}. 

A {\em canonical model} is defined to be a projective morphism $h: Z \to S$ 
with a surjective morphism $g: Y \to Z$ with connected geometric fibers 
from a minimal model such that 
$K_Y+C = g^*H$ for an $\mathbf{R}$-Cartier divisor $H$ on $Z$ which is 
ample over $S$.

Minimal models are not necessarily unique 
for a given morphism $f: (X,B) \to S$.
But any minimal models are equivalent
in the following sense: if $\alpha_i: (X,B) \dashrightarrow (Y_i,C_i)$ 
for $i=1,2$ are minimal models and $p_i: V \to Y_i$ are common resolutions 
such that $p_1^{-1} \circ \alpha_1 = p_2^{-1} \circ \alpha_2$, then 
we can prove that 
$p_1^*(K_{Y_1}+C_1)=p_2^*(K_{Y_2}+C_2)$ as a consequence of 
the Hodge index theorem.
The negativity condition implies that 
the $Y_i$ are isomorphic in codimension one.
On the other hand, the canonical model is unique in the following sense:
the composite rational map to the canonical model 
$\beta = g \circ \alpha$ is uniquely determined by the given morphism 
$f: (X,B) \to S$. 


\section{Numerical linear system}

We propose definitions of numerical linear systems and 
numerical Zariski decompositions.
They are easier to deal with 
than $\mathbf{R}$-linear systems as in \cite{BCHM} and 
sectional decompositions as in \cite{crep} and \cite{Nakayama}. 

Let $f: X \to S$ be a projective morphism from a $\mathbf{Q}$-factorial
normal variety, and $D$ an $\mathbb{R}$-Cartier divisor 
which is pseudo-effective over $S$, i.e., $[D] \in \text{Psef}(X/S)$.
We define the {\em numerically fixed part} of $D$ by
\[
N(D) = \lim_{\epsilon \downarrow 0} 
(\inf \{D' \,\vert\, D' \equiv D +\epsilon A , D' \ge 0\})
\]
where $A$ is an arbitrarily fixed ample divisor, $D' \ge 0$ means that
the $\mathbf{R}$-divisor $D'$ is effective, and the infimum of 
$\mathbf{R}$-divisors is defined by the infimums of coefficients.
We note that $N(D)$ does not depend on the choice of $A$.
We also note that there are only finitely many irreducible components of 
$N(D)$, since they are linearly independent in $N^1(X/S)$. 
For example, if $D$ is nef, then $N(D) = 0$.

A similar statement to the following lemma 
for the case of the $\mathbf{R}$-linear systems is proved in 
\cite{BCHM} using a result in \cite{Nakayama}.
We believe that our approach is more natural and easier.

\begin{Lem}~\label{exceptional}
Let $f: (X,B) \to S$ be a projective morphism from a $\mathbf{Q}$-factorial 
LC pair, and let $\alpha: (X,B) \dashrightarrow (Y,C)$ be a minimal model.
Then the prime divisors contracted by $\alpha$ are precisely the 
irreducible components of the numerically fixed part of $K_X+B$.
\end{Lem}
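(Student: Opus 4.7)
The plan is to show the two inclusions on a common log resolution $p\colon V\to X$, $q\colon V\to Y$ with $q=\alpha\circ p$ wherever $\alpha$ is defined. Write $F:=p^*(K_X+B)-q^*(K_Y+C)$; since $q_*F=0$, $F$ is $q$-exceptional, and condition (3) of the minimal model definition says that $F$ has positive coefficient $f_i$ at the strict transform $\tilde E_i$ of each $\alpha$-exceptional prime divisor $E_i$ of $X$. In particular $p_*F=\sum_i f_iE_i$.

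For the inclusion that every component of $N(K_X+B)$ is $\alpha$-exceptional, I take a prime divisor $P$ on $X$ with $P':=\alpha_*P\neq 0$ and show $\mathrm{coeff}_P N(K_X+B)=0$. Since $K_Y+C$ is nef, for any ample $H$ on $Y$ and any $\eta>0$ there exist effective $D_Y\equiv K_Y+C+\eta H$ of arbitrarily small coefficient at $P'$. Then $p_*q^*D_Y+\sum_i f_i E_i$ is effective, $\equiv K_X+B+\eta\,p_*q^*H$, with coefficient at $P$ equal to the coefficient of $D_Y$ at $P'$. To replace the perturbation $\eta\,p_*q^*H$ by $\epsilon A$, I add a general effective representative of the ample class $\epsilon A-\eta\,p_*q^*H$ (ample for $\eta/\epsilon$ small by openness of the ample cone), chosen to miss $P$, and then let $\eta\downarrow 0$.

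For the reverse inclusion, that every $E_i$ appears in $N(K_X+B)$ with coefficient at least $f_i$, the cleanest form of the argument uses the negativity lemma of the MMP (Koll\'ar--Mori, Lemma 3.39) in the case of an effective $D\equiv K_X+B$: the divisor $B_V:=p^*D-F$ is $\mathbf{R}$-linearly equivalent to $q^*(K_Y+C)$, so $-B_V$ is $q$-numerically trivial, hence $q$-nef, while $q_*B_V=\alpha_*D$ is effective. The lemma then forces $B_V\geq 0$, whence $\mathrm{coeff}_{E_i}(D)=\mathrm{coeff}_{\tilde E_i}(p^*D)\geq f_i$.

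The main obstacle is that $N(K_X+B)$ is defined via the $\epsilon A$-perturbation, and for $D\equiv K_X+B+\epsilon A$ the term $\epsilon p^*A$ in $B_V$ prevents $-B_V$ from being $q$-nef on $q$-contracted curves that are not $p$-contracted, so the negativity lemma does not apply directly. I would bridge this by a limit argument: for $\epsilon_k\downarrow 0$, pick effective $D_k\equiv K_X+B+\epsilon_k A$ nearly minimizing the coefficient at $E_i$, bound coefficients uniformly via intersection with a fixed ample power, and extract by diagonal selection over prime divisors a limit effective $\mathbf{R}$-divisor $D_0$; applying the $\epsilon=0$ negativity argument to $D_0$ would then yield $\mathrm{coeff}_{E_i}(D_0)\geq f_i$. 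The delicate point is to verify that $D_0$ genuinely represents $K_X+B$, or at least that the possible loss of mass to a diffuse positive current does not occur at the component $E_i$; this is the real technical obstacle, and is precisely where the framework of numerical linear systems introduced in Section 3 should be brought to bear.
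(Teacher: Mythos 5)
Your first inclusion (no non-$\alpha$-exceptional prime divisor survives in $N(K_X+B)$) is essentially the paper's argument: transport an effective representative of $K_Y+C+\eta H$ with no component along $P'$ back to $X$ via $p_*q^*$, absorb the discrepancy $\sum_i f_iE_i$, and adjust the perturbation. If anything you are slightly more careful than the paper in converting the perturbation $\eta\,p_*q^*H$ into the ample $\epsilon A$ used in the definition of $N$.

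The reverse inclusion is where you have a genuine gap, and you have diagnosed it correctly yourself: the negativity lemma applied to $p^*D-q^*(K_Y+C)$ fails once the perturbation $\epsilon A$ is present, and the proposed repair by extracting a limit of effective divisors $D_k\equiv K_X+B+\epsilon_kA$ is exactly the kind of argument that can lose mass at $E_i$ and is not justified. The missing idea is to change the reference divisor: compare $p^*D'$ not with $q^*(K_Y+C)$ but with $q^*\alpha_*D'$, the pullback of the strict transform of $D'$ itself, so that the perturbation is carried along to $Y$ rather than discarded. Concretely, if $D'\equiv K_X+B+\epsilon A$ is effective, then $N:=D'-(K_X+B+\epsilon A)$ satisfies $N\equiv_S0$, and $G:=p^*D'-q^*\alpha_*D'-F-\epsilon\bigl(p^*A-q^*\alpha_*A\bigr)=p^*N-q^*\alpha_*N$ is $q$-numerically trivial (every $q$-contracted curve is contracted over $S$) with $q_*G=0$; the negativity lemma applied in both directions gives $G=0$. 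Pushing forward by $p$ yields the exact identity $D'-p_*q^*\alpha_*D'=\sum_if_iE_i+\epsilon\bigl(A-p_*q^*\alpha_*A\bigr)$, valid for \emph{every} effective $D'$ in the class, and since $p_*q^*\alpha_*D'\ge0$ this gives the uniform lower bound $\mathrm{coeff}_{E_i}(D')\ge f_i+\epsilon c_i$ with $c_i$ a constant independent of $D'$. Taking the infimum over $D'$ and letting $\epsilon\downarrow0$ gives $\mathrm{coeff}_{E_i}N(K_X+B)\ge f_i>0$ with no limit of divisors ever taken. This single substitution ($q^*\alpha_*D'$ in place of $q^*(K_Y+C)$) is what closes the argument; the rest of your setup, including the identification $p_*F=\sum_if_iE_i$ with $f_i>0$, is sound and agrees with the paper.
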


\begin{proof}
Let $p: \tilde X \to X$ and $q: \tilde X \to Y$ be a common resolution.
Then $p^*(K_X+B) - q^*(K_Y+C)$ is effective, and the coefficients of the
exceptional divisors of $\alpha$ are positive.

Since $K_Y+C$ is nef, there is no numerically fixed part;
\[
\inf\{D'_Y \,\vert\, D'_Y \equiv K_Y+C+\epsilon A_Y, D'_Y \ge 0\} = 0
\]
for any $\epsilon > 0$ and any ample divisor $A_Y$ on $Y$.
Because of the negativity of $K_X+B$ for $\alpha$, the difference 
$(K_X+B+\epsilon \alpha_*^{-1}A_Y) - p_*q^*(K_Y+C+\epsilon A_Y)$
is still effective if $\epsilon$ is small enough, where 
$\alpha_*^{-1}A_Y$ is the strict transform of $A_Y$.
Then it follows that
\[
\inf\{D' \,\vert\, D' \equiv K_X+B+\epsilon \alpha_*^{-1}A_Y, D' \ge 0\}
\]
is exceptional for $\alpha$, and so are any irreducible components 
of the numerically fixed part of $K_X+B$. 

Conversely, if $D' \equiv K_X+B+\epsilon A$, then 
$\alpha_*D' \equiv K_Y+C+\epsilon \alpha_*A$.
Hence $D' - p_*q^*\alpha_*D'$ has positive coefficients on any exceptional
divisors of $\alpha$ if $\epsilon$ is small enough.
Therefore any exceptional divisors of $\alpha$ are numerically fixed for 
$K_X+B$.
\end{proof}


\section{Cone decompositions}

A {\em polytope} is a closed convex hull of finitely many 
points in a real vector space.
It is called {\em rational} if these points have rational coordinates.

We start with a polytope decomposition with respect to the canonical models:

\begin{Thm}\label{cano}
Let $(X,\bar B)$ be a $\mathbf{Q}$-factorial 
KLT pair with a projective morphism $f: X \to S$ to a 
base space, $B_1,\dots,B_r$ effective $\mathbf{Q}$-Cartier divisors, 
and $\tilde V$ a polytope in the space
$\{B = \sum_i b_iB_i \,\vert\, b_i \in \mathbf{R}\} \cong \mathbf{R}^r$
such that the pairs $(X,B)$ are LC for all $B \in \tilde V$.
We consider a closed convex subset
\[
V = \{B \in \tilde V \,\vert\, [K_X+B] \in \text{Psef}(X/S)\}.
\]
Assume that for each $B \in V$, there exist a minimal model 
$\alpha: (X,B) \dashrightarrow (Y,C)$ and a canonical model
$g: Y \to Z$ for $f: (X,B) \to S$.
Moreover assume that there exists a real number $\epsilon > 0$ 
for each given $B \in V$ with $\alpha: X \dashrightarrow Y$
and $g: Y \to Z$ as above, such that the morphism 
$g: (Y,\alpha_*B') \to Z$ for $B' \in \tilde V$ 
has minimal and canonical models whenever 
$[K_Y+\alpha_*B'] \in \text{Psef}(Y/Z)$ 
and $\Vert B' - B \Vert \le \epsilon$, where $\Vert \Vert$ denotes 
the maximum norm of the coefficients.
Then there exists a finite decomposition to disjoint subsets 
\[
V = \coprod_{j=1}^s V_j
\]
and rational maps $\beta_j: X \dashrightarrow Z_j$
which satisfies the following conditions:

\begin{enumerate}
\item $B \in V_j$ if and only if $\beta_j$ gives the canonical model 
for $f: (X,B) \to S$.

\item The closures $\bar V_j$, hence $V$, are polytopes for all $j$.
Moreover, if $\tilde V$ is a rational polytope, then
so are the $\bar V_j$ and $V$.
\end{enumerate}
\end{Thm}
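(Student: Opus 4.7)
My plan is to establish the decomposition in three stages: a local reduction near each $B_0 \in V$ using the hypothesis, a finiteness argument inside each local neighborhood using the cone theorem, and a compactness step to globalize. Fix $B_0 \in V$ with minimal model $\alpha_0: X \dashrightarrow Y_0$ and canonical contraction $g_0: Y_0 \to Z_0$, so $K_{Y_0}+\alpha_{0*}B_0 = g_0^*H_0$ with $H_0$ ample over $S$. The hypothesis supplies $\epsilon_0 > 0$ such that for every $B' \in \tilde V$ with $\|B'-B_0\| \le \epsilon_0$ and $K_{Y_0}+\alpha_{0*}B'$ pseudo-effective over $Z_0$, the relative pair $(Y_0, \alpha_{0*}B')/Z_0$ admits a minimal model $Y_0 \dashrightarrow Y_{B'}$ and canonical contraction $Y_{B'} \to Z_{B'}$. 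I will argue that the composition $\beta_{B'}: X \dashrightarrow Y_0 \dashrightarrow Y_{B'} \to Z_{B'} \to S$ is the canonical model of $(X,B')$ over $S$: the negativity condition for the composed birational map follows from negativity of $\alpha_0$ together with the fact that $Z_0$-MMP steps only contract loci that are $(K_{Y_0}+\alpha_{0*}B')$-negative; the divisor $K_{Y_{B'}}+\alpha_{B'*}B'$ is nef over $S$ because it is nef over $Z_0$ while $Z_0 \to S$ is governed by the ample $H_0$; and the induced class on $Z_{B'}$ is ample over $S$ by the same mechanism.

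The finiteness at $B_0$ is the heart of the argument. Since $K_{Y_0}+\alpha_{0*}B_0 \equiv_{Z_0} 0$, the cone theorem applied to $(Y_0, \alpha_{0*}B_0)$ over $Z_0$ endows $\text{Nef}(Y_0/Z_0)$ with a locally rational polyhedral structure at the origin: only finitely many extremal rays bound it near $0$, and they are generated by rational classes. Shrinking $\epsilon_0$ if necessary, every perturbation $K_{Y_0}+\alpha_{0*}B' \equiv_{Z_0} \alpha_{0*}(B'-B_0)$ stays inside this polyhedral zone, so only finitely many MMP sequences over $Z_0$ are possible and hence only finitely many outcomes $\beta_{B'}$ arise. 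The locus of $B' \in U_{B_0} \cap V$ realising any one outcome is cut out by a finite list of linear inequalities --- nefness on the finitely many extremal curves of each step --- all with rational coefficients, so its closure inside $U_{B_0}$ is a rational polytope.

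To globalize, observe that $V$ is closed in the compact polytope $\tilde V$, hence compact. Cover $V$ by finitely many balls $U_{B_1}, \dots, U_{B_n}$ supplied by the previous step, take the common refinement of the local decompositions, and merge strata whose canonical maps $\beta_j$ coincide; this yields the desired finite disjoint decomposition $V = \coprod_{j=1}^s V_j$ with rational maps $\beta_j: X \dashrightarrow Z_j$ and polytope closures (rational provided $\tilde V$ is). The main obstacle I anticipate is in the local reduction: verifying that a $Z_0$-relative MMP really produces the $S$-canonical model of $(X,B')$ requires careful bookkeeping of the negativity condition along each flip and of the passage from ampleness over $Z_0$ to ampleness over $S$. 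Lemma~\ref{exceptional} should be the natural tool here, since it identifies exceptional divisors of $\alpha_0$ with components of the numerically fixed part of $K_X+B_0$, and this characterization is stable under small perturbations of $B_0$.
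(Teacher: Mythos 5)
Your overall architecture (local analysis at each $B_0$, then compactness of $V$) matches the paper's, but the step you yourself call the heart of the argument --- finiteness at $B_0$ via the cone theorem --- has a genuine gap. At $B_0$ the divisor $K_{Y_0}+\alpha_{0*}B_0=g_0^*H_0$ is numerically trivial over $Z_0$, so its class is the \emph{apex} of $\text{Nef}(Y_0/Z_0)$, and the cone theorem (which describes the part of $\partial\text{Nef}$ visible from $[K+B]$) gives no polyhedrality information there: no boundary point is visible from the apex, and ``near $0$'' a cone looks like the entire cone, not like a finite polyhedral neighborhood. Concretely, $(Y_0,\alpha_{0*}B_0)$ over $Z_0$ is a log Calabi--Yau fibration, and the paper's closing examples (the versal $I_2$ deformation and the $(2,2,3)$ hypersurface) show that exactly in this situation the pseudo-effective cone can break into \emph{infinitely} many chambers corresponding to distinct canonical models. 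Shrinking $\epsilon_0$ does not help, because the classes $K_{Y_0}+\alpha_{0*}B'\equiv_{Z_0}\alpha_{0*}(B'-B_0)$ fill out a neighborhood of the apex, which meets every chamber of the cone. So ``only finitely many MMP sequences over $Z_0$ are possible'' is not a consequence of the cone theorem; it is essentially the statement being proved.

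What saves the paper is that the relevant directions $B'-B_0$ do not sweep out the whole cone but only the cone over the compact, \emph{lower-dimensional} set $\partial U$, where $U=\{B\in\tilde U\,\vert\,[K_{Y_0}+\alpha_{0*}B]\in\text{Psef}(Y_0/Z_0)\}$. Since $K_{Y_0}+\alpha_{0*}B_0\equiv_{Z_0}0$, the numerical class over $Z_0$ is constant up to positive scaling along each ray from $B_0$, so the chamber structure on $U$ is the cone with vertex $B_0$ over the chamber structure on $\partial U$; the latter is finite by induction on $\dim V$ applied to $g_0:(Y_0,\alpha_{0*}B)\to Z_0$. Your proposal contains no such dimension induction, and without it the local finiteness does not close. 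A second, smaller issue: your passage from canonical models over $Z_0$ to canonical models over $S$ (which you flag but do not resolve) is handled in the paper only for points $B'=(1-\delta)B_0+\delta B$ on segments emanating from $B_0$, via the ampleness of $(1-\delta)h^*H_0+\delta H$ over $S$ for small $\delta$; this is another reason the local decomposition must be built as a cone with vertex $B_0$ rather than chamber-by-chamber on the whole ball.
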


\begin{proof}
We use an idea of Shokurov \cite{Shokurov}.
We proceed by induction on the dimension of $V$.
If $\dim V=0$, then the assertion is clear.
Assume that $\dim V>0$.
We fix an arbitrary point $B_0 \in V$.
Let $\alpha_0: (X,B_0) \dashrightarrow (Y_0,C_0)$ be a minimal model and 
$g_0: (Y_0,C_0) \to Z_0$ a canonical model.
We can write $K_{Y_0}+C_0 = g_0^*H_0$ for an $\mathbf{R}$-Cartier divisor 
$H_0$ on $Z_0$ which is ample over $S$.

We take a real number $\epsilon > 0$ such that 
$g_0: (Y_0,\alpha_{0*}B) \to Z_0$ has a minimal model
$\alpha: (Y_0,\alpha_{0*}B) \dashrightarrow (Y,C)$
and a canonical model $g: (Y,C) \to Z$ with $h: Z \to Z_0$ if 
$K_{Y_0}+\alpha_{0*}B$ is pseudo-effective over $Z_0$ and 
$\Vert B - B_0 \Vert \le \epsilon$.
If $\epsilon$ is sufficiently small, then $K_X+B$ is negative for $\alpha_0$.

We can write $K_Y+C = g^*H$ for an $\mathbf{R}$-Cartier divisor $H$ 
on $Z$ which is ample over $Z_0$.
If we take $\delta > 0$ sufficiently small, then 
$(1-\delta)h^*H_0 + \delta H$ is ample over $S$.
If we set $B' = (1-\delta)B_0 + \delta B$, then 
$\alpha \circ \alpha_0: (X,B') \dashrightarrow (Y,C')$ 
for $C' = (1-\delta)\alpha_*C_0 + \delta C$is a
minimal model for $f: (X, B') \to S$ because the negativity still holds, 
and $g: (Y,C') \to Z$ is a canonical model because the ampleness holds.

We take a polytope $\tilde U$ inside $\tilde V$ which contains $B_0$ in the
relative interior and which is contained in the above $\epsilon$ neighborhood.
We take $\tilde U$ to be rational when $\tilde V$ is rational.
Let 
\[
U = \{B \in \tilde U \,\vert\, [K_{Y_0}+\alpha_{0*}B] \in
\text{Psef}(Y_0/Z_0)\}
\]
Then $V \cap \tilde U \subset U$.
By the induction assumption, the boundary $\partial U$ is 
a union of polytopes, and there is a finite polytope decomposition
of $\partial U$ which corresponds to the classification of
canonical models of $g_0: (Y_0,\alpha_{0*}B) \to Z_0$.
Moreover it is rational if so is $\tilde V$.
It also follows that $U \subset V$. 

If $\tilde U$ was chosen sufficiently small, 
then the cones over these polytopes 
with the vertex $B_0$ give a finite polytope decomposition of $U$, 
which is rational if $\tilde V$ is.
Since $V$ is compact, it is covered by finitely many such $U$'s, 
and the assertion is proved.
\end{proof}

Next we consider a further polytope decomposition with respect to the 
minimal models:

\begin{Thm}\label{mini}
Assume the conditions of the above theorem.
Then each $V_j$ has further decomposed to a finite disjoint union
\[
V_j = \coprod_{k=1}^t W_{j,k}
\]
which satisfies the following conditions: 
let $\alpha: X \dashrightarrow Y$ be a birational map such that
\[
W = \{B \in V \,\vert\, \alpha \text{ is a minimal model for } (X,B)\}
\]
is non-empty. Then

\begin{enumerate}
\item There exists an index $j$ such that $W \subset \bar V_j$.

\item If $W \cap V_j$ is non-empty for some $j$, 
then $W \cap V_j$ coincides with one of the $W_{j,k}$.

\item The closure $\bar W_{j,k}$ is a polytope for any $j$ and $k$.
Moreover, if $\tilde V$ is a rational polytope, then
so are the $\bar W_{j,k}$.
\end{enumerate}
\end{Thm}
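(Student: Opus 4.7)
The plan is to combine Lemma~\ref{exceptional}, which identifies the contracted divisors of a minimal model as the support of $N(K_X+B)$, with a local polyhedral analysis in the spirit of the proof of Theorem~\ref{cano}. The natural candidate decomposition is by the sets $W(\alpha) = \{B \in V \mid \alpha \text{ is a minimal model for } (X,B)\}$, one for each birational map $\alpha\colon X \dashrightarrow Y$ for which $W(\alpha)$ is non-empty.

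I would prove statement~(1) first. Fix $B_0 \in W(\alpha)$ and let $g_0 \colon Y \to Z_0$ be the canonical model morphism of $(X,B_0)$, so $K_Y + \alpha_* B_0 = g_0^* H_0$ with $H_0$ ample over $S$. For any other $B \in W(\alpha)$ the divisor $K_Y + \alpha_* B$ is again nef over $S$, and the semiample fibration it induces can only further contract relative to $g_0$ as $B$ moves; hence the canonical model of $(X,B)$ dominates a contraction of $Z_0$, which places $B$ in $\bar V_j$ for the index $j$ with $B_0 \in V_j$.

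For statements~(2) and~(3), I would cut out $W(\alpha)$ near a given $B_0$ by three conditions on $B$: (a) the pair $(Y, \alpha_* B)$ is LC, a finite set of linear inequalities in the coefficients of $B$; (b) negativity of $K_X+B$ for $\alpha$, i.e. positivity of each $\alpha$-exceptional coefficient of $p^*(K_X+B) - q^*(K_Y+\alpha_* B)$ on a common resolution, again linear in $B$; and (c) $K_Y + \alpha_* B \in \text{Nef}(Y/S)$. For (c), the cone theorem stated in Section~2 applied to $(Y,\alpha_* B_0)$ yields local rational polyhedrality of $\text{Nef}(Y/S)$ at $[K_Y + \alpha_* B_0]$, so the preimage under the affine map $B \mapsto K_Y + \alpha_* B$ is rational polyhedral near $B_0$. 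Intersecting this with (a) and (b) makes $W(\alpha) \cap V_j$ locally rational polyhedral.

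Finiteness follows because the minimal model $Y$, up to isomorphism in codimension one, is determined by which subset of a finite list of candidate prime divisors on $X$ it contracts — namely those whose image in $Z_j$ has codimension at least two. Compactness of $V$ together with the local polyhedral descriptions then yields the finite decomposition $V_j = \coprod_k W_{j,k}$, with rational polyhedral closures when $\tilde V$ is rational. I expect the main obstacle to be statement~(1): ensuring that the canonical fibration remains compatible as $B$ moves throughout $W(\alpha)$. This relies on the Hodge-index-type uniqueness of minimal-model pullbacks recalled in Section~2, and care is needed so that any degeneration of the semiample fibration of $K_Y+\alpha_* B$ still lies in the closure of the same $V_j$.
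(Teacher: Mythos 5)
Your overall plan --- decomposing by the sets $W(\alpha)$, invoking Lemma~\ref{exceptional} to control the contracted divisors, and noting that negativity is a rational linear condition --- is the paper's strategy, but two steps have real gaps. For statement (1), which you rightly flag as the obstacle, the argument from an arbitrary base point $B_0\in W$ fails: the semiample fibration of $K_Y+\alpha_*B$ does not ``only further contract'' as $B$ moves away from $B_0$; if $B_0$ lies on the boundary of $W$ the fibration \emph{un}-contracts as $B$ moves inward, and nothing in your sketch pins down a single index $j$. The paper's resolution is elementary convex geometry: $W$ is convex (nefness and negativity pass to convex combinations), so one may choose $B$ in the \emph{relative interior} of $W$; then $g^*\text{Nef}(Z/S)$ is a face of $\text{Nef}(Y/S)$ containing $[\alpha_*(K_X+B)]$ in its relative interior, and since every $[\alpha_*(K_X+B')]$ with $B'\in W$ is nef while $B$ is an interior point of segments of $W$, the face property forces every relative-interior point of $W$ into $g^*\text{Amp}(Z/S)$, i.e.\ into one $V_j$; the rest of $W$ then lies in $\bar V_j$.

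For finiteness, your candidate finite list --- prime divisors of $X$ whose image in $Z_j$ has codimension at least two --- is not known to be finite, because the theorem does not assume $K_X+B$ big, so $Z_j$ may have strictly smaller dimension and $\beta_j$ need not be birational. The paper instead takes the union, over the finitely many vertices $B'$ of the polytope $\bar V_j$, of the supports of $N(K_X+B')$; each support is finite, and by convexity of the numerically fixed part together with Lemma~\ref{exceptional} every $\alpha$-contracted divisor for every $B\in V_j$ lies in this union. You also need, and only implicitly assume, that two minimal models over the same canonical model which are isomorphic in codimension one cut out the \emph{same} set $W\cap V_j$; the paper proves this by transporting $K_{Y_1}+\alpha_{1*}B=g_1^*H$ to $K_{Y_2}+\alpha_{2*}B=g_2^*H$. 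Once that is in place, your appeal to the cone theorem for the nefness condition is unnecessary (and delicate, since that theorem only gives polyhedrality on compact subsets of the visible part of the boundary): within $V_j$ nefness is automatic from the canonical model, and the chambers are cut out by the rational linear negativity inequalities alone, which is how the paper obtains (3).
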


We note that there may be infinitely many $W$'s such that 
$W \cap V_j=W_{j,k}$ for fixed $j,k$.

\begin{proof}
1. If $\alpha_*(K_X+B_i)$ is nef for $i = 1,2$, then so is 
$\alpha_*(K_X+tB_1+(1-t)B_2)$ for $t \in [0,1]$.
The negativity descends as well, hence $W$ is a convex subset of $V$.

We take a point $B \in W$ in the relative interior, and let $g: Y \to Z$
be the canonical model for $(Y,\alpha_*B)$.
Then $g^*\text{Nef}(Z/S)$ is a face of $\text{Nef}(Y/S)$, and 
$[\alpha_*(K_X+B)] \in g^*\text{Amp}(Z/S)$.
Since $[\alpha_*(K_X+B')] \in \text{Nef}(Y/S)$ for all $B' \in W$, 
it follows that $[\alpha_*(K_X+B')] \in g^*\text{Amp}(Z/S)$ if 
$B'$ is a point in the relative interior.
Therefore $W \subset \bar V_j$ if $V_j$ corresponds to $g \circ \alpha$.

2. Let $\alpha_i: X \dashrightarrow Y_i$ for $i=1,2$ be birational maps,
and let $W_i$ be the corresponding subsets of $V$ which are assumed to
be non-empty.
Assume that there are morphisms $g_i: Y_i \to Z$ such that
$\beta = g_1 \circ \alpha_1 = g_2 \circ \alpha_2$ corresponds to 
some $V_j$.
Let $\gamma: Y_1 \dashrightarrow Y_2$ be the birational map such that 
$\alpha_2 = \gamma \circ \alpha_1$.
We claim that, if $\gamma$ is an isomorphism in codimension one, then 
$W_1 \cap V_j = W_2 \cap V_j$.

Indeed, if $B \in W_1 \cap V_j$, 
then $K_{Y_1}+\alpha_{1*}B = g_1^*H$ for some $H$ on $Z$ 
which is ample over $S$.
Since $\gamma$ is an isomorphism in codimension one, 
it follows that $K_{Y_2}+\alpha_{2*}B = g_2^*H$, 
hence $K_{Y_2}+\alpha_{2*}B$ is nef and equivalent to $K_{Y_1}+\alpha_{1*}B$.
The negativity for the exceptional divisors 
holds at the same time for $Y_1$ and $Y_2$.
Therefore $B \in W_2 \cap V_j$.

Let $\{E_m\}$ be the set of all the prime divisors $E_m$ each of which is 
contained in the numerically fixed part of $K_X+B'$ for $B'$ being a
vertex of $\bar V_j$.
By Lemma~\ref{exceptional}, this is a finite set.
Any numerically fixed component of $K_X+B$ for arbitrary $B \in V_j$
belongs to this set.
Therefore there are only finitely many possibilities 
for the set of prime divisors which coincides 
with the set of numerically fixed components 
of $K_X+B$ for some $B \in V_j$.
Hence the decomposition is finite.

3. The negativity condition of a prime divisor 
with respect to a birational map 
is expressed by a linear inequality with rational coefficients.
Thus each chamber $W_{j,k}$ is bounded by linear equalities 
with rational coefficients of the following two types:
(a) those coming from the negativity condition 
for the corresponding minimal model in the case where the boundary is open, 
and (b) those from other minimal models in the case of closed boundaries.
Hence we have the assertion.
\end{proof}

\begin{Cor}
A sequence of flips in the MMP with scaling terminates
if minimal and canonical models exist along the line segment in the
space of divisors corresponding to this MMP process.
\end{Cor}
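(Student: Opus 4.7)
The plan is to parametrize the MMP with scaling as traversal of a compact line segment in the boundary parameter space and then invoke the finite chamber decomposition of Theorem~\ref{mini}. Let $H$ be the scaling divisor and choose $t_0 > 0$ so that $K_X + B + t_0 H$ is nef over $S$. By the defining recipe of the MMP with scaling, we obtain thresholds $t_0 \ge t_1 \ge t_2 \ge \cdots \ge 0$ and birational maps $\alpha_i : X \dashrightarrow X_i$ such that $\alpha_{i*}(K_X + B + tH)$ is nef over $S$ exactly for $t \in [t_{i+1}, t_i]$, with a flip or divisorial contraction at each threshold $t_{i+1}$. The line segment in question is
\[
L = \{B + tH : t \in [0,t_0]\}.
\]

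Next I would choose a rational polytope $\tilde V$ in the parameter space containing $L$ and small enough that the hypotheses of Theorem~\ref{cano} are satisfied; the assumption of the Corollary, together with the $\epsilon$-openness clause built into the hypothesis of Theorem~\ref{cano}, lets us thicken $L$ to such a $\tilde V$. Theorem~\ref{mini} then yields a finite decomposition $V = \coprod_{j,k} W_{j,k}$ of the pseudo-effective locus inside $\tilde V$, where each chamber $W_{j,k}$ collects those $B'$ for which a fixed birational map $X \dashrightarrow Y$ serves as a minimal model for $(X, B')$. For each MMP index $i$ and each $t$ in the open subinterval $(t_{i+1}, t_i)$, the map $\alpha_i$ is by construction a minimal model for $(X, B + tH)$, so this subinterval lies in a single chamber $W_{j(i),k(i)}$.

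It remains to argue that the chambers $W_{j(i),k(i)}$ are mutually distinct for distinct $i$, since then compactness of $L$ combined with finiteness of the decomposition bounds the number of MMP steps. This is the main obstacle. For a divisorial contraction, $\alpha_{i+1}$ contracts one more prime divisor than $\alpha_i$, and Lemma~\ref{exceptional} identifies the contracted divisors with the irreducible components of the numerical fixed part of $K_X+B+tH$; the fixed parts on the two adjacent subintervals therefore differ, and so do their chambers. For a flip, $\alpha_i$ and $\alpha_{i+1}$ agree in codimension one but are not isomorphic, and the chamber bookkeeping in the proof of Theorem~\ref{mini}(2) sees the difference because the flipping extremal ray has positive intersection with $\alpha_{i*}(K_X+B+tH)$ on one subinterval and with $\alpha_{(i+1)*}(K_X+B+tH)$ on the adjacent one, so the two birational maps label different chambers. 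The compactness of $L$ and the finiteness of the decomposition then force the sequence of flips and divisorial contractions to terminate.
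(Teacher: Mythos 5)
Your overall strategy---place the scaling segment inside the polytope $V$, invoke the finite chamber decomposition, and show that each flip forces a change of chamber---has the same shape as the paper's proof, but the paper works only with the coarser decomposition $V=\coprod_j V_j$ by canonical models from Theorem~\ref{cano}, and the one step you leave vague is exactly the step carrying all the content. The gap is in your flip case. You assert that ``the chamber bookkeeping in the proof of Theorem~\ref{mini}(2) sees the difference'' between $\alpha_i$ and $\alpha_{i+1}$; in fact that bookkeeping does the opposite: the claim proved there is that two minimal models which are isomorphic in codimension one \emph{over the same canonical model} define the \emph{same} chamber $W\cap V_j$, and a flip is precisely an isomorphism in codimension one. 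So if the two subintervals adjacent to a flip threshold lay in the same $V_j$, Theorem~\ref{mini}'s labelling would identify their chambers rather than separate them, and your distinctness claim would collapse. What must be proved is that this situation cannot occur, i.e., that a flip cannot have both sides in one $V_j$. The paper does this directly: if $g^{\pm}:X^{\pm}\to Z$ were the same canonical model on both sides, with $K_{X^{\pm}}+B+(t\mp\epsilon)H=(g^{\pm})^*H^{\pm}$ for $H^{\pm}$ ample on $Z$, then transporting these equalities across the flip (an isomorphism in codimension one) gives $K_{X^{\pm}}+B+(t\pm\epsilon)H=(g^{\pm})^*H^{\mp}$, so $K_{X^-}+B+(t-\epsilon)H$ would be nef, contradicting the existence of the flipping ray. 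Your remark about the sign of the intersection with the flipping ray gestures at the same fact (that $\alpha_i$ fails to be a minimal model below the threshold), and one could finish from there by noting that two chambers are either equal or disjoint; but as written the signs and roles are garbled and the decisive contradiction is never derived.

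Two smaller points. ``Mutually distinct for distinct $i$'' is only argued for adjacent indices; to exclude a chamber being revisited after several flips you need the convexity of the chambers (established in the proof of Theorem~\ref{mini}, part 1), so that each chamber meets the segment in a single interval. And there is no need to thicken the segment: one may take $\tilde V$ to be the segment itself, a one-dimensional polytope; your appeal to the ``$\epsilon$-openness clause'' to manufacture $\tilde V$ is circular, since that clause is a hypothesis to be verified, not a tool. (The divisorial-contraction discussion is harmless but unnecessary: the Corollary concerns flips, and divisorial contractions terminate because they drop the Picard number.)
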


\begin{proof}
The chambers $V_j$ are finite in number.
Therefore we may fix one $V_j$ in order to prove the termination.
Let $\alpha: X^- \dashrightarrow X^+$ be a flip 
in the MMP for a pair $(X,B)$ with scaling $H$, and let $t$ be 
the real number such that $K_{X^-}+B+tH$ is numerically trivial
for the flip.

Suppose that 
both $(X, B + (t+\epsilon)H)$ and $(X, B + (t-\epsilon)H)$ belong to
the same chamber $V_j$.
Thus $X^-$ and $X^+$ are minimal models respectively for these pairs.
Let $g^{\pm}: X^{\pm} \to Z$ be the canonical models.
There are ample $\mathbf{R}$-divisors $H^{\pm}$ on $Z$ such that
$K_{X^{\pm}} + B + (t \mp \epsilon)H = (g^{\pm})^*H^{\pm}$.
Since $\alpha$ is an isomorphism in codimension one,
it follows that $K_{X^{\mp}} + B + (t \mp \epsilon)H = (g^{\mp})^*H^{\pm}$.
In other words, $K_{X^{\pm}} + B + (t \pm \epsilon)H$ is nef.
But this is a contradiction.
Therefore there does not exists a flip inside the same chamber $V_j$.
\end{proof}

\begin{Cor}
Let $f: (X,B) \to S$ be a projective morphism from a KLT pair, and
let $\alpha_i: (X,B) \dashrightarrow (Y_i,C_i)$ ($i=1,2$) be two 
minimal models.
Assume that there exists their canonical model.
Then they are connected by a sequence of flops. 
\end{Cor}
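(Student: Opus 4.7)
The plan is to apply Theorem~\ref{mini} in the setting relative to the common canonical model $Z$, for a one-parameter deformation that interpolates between $Y_1$ and $Y_2$, and then to check that each wall crossing is a flop.

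First I would set up the picture over $Z$. Let $h: Z \to S$ and $g_i: Y_i \to Z$ be the morphisms supplied by the canonical model hypothesis, so that $K_{Y_i}+C_i = g_i^*H$ for some $H$ on $Z$ ample over $S$; in particular $K_{Y_i}+C_i \equiv_Z 0$. By the equivalence of minimal models recalled in \S 2, the induced birational map $\gamma = \alpha_2 \circ \alpha_1^{-1}: Y_1 \dashrightarrow Y_2$ is an isomorphism in codimension one, and $(Y_1, C_1)$ remains $\mathbf{Q}$-factorial KLT. I would choose an $\mathbf{R}$-divisor $A$ on $Y_2$ ample over $Z$, set $A' = \gamma^{-1}_*A$ on $Y_1$, and form the one-parameter family $(Y_1, C_1 + sA')$ for $s \in [0,\epsilon]$, with $\epsilon > 0$ sufficiently small that the family remains KLT. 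Since
\[
\gamma_*(K_{Y_1}+C_1+sA') = K_{Y_2}+C_2+sA \equiv_Z sA
\]
is ample over $Z$ for every $s > 0$, the pair admits $Y_2$ as its canonical model over $Z$ throughout $(0,\epsilon]$, while at $s=0$ it is itself minimal with canonical model $Z$.

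Next I would apply Theorem~\ref{mini} with $S$ replaced by $Z$, base pair $(Y_1, C_1)$, and polytope $\tilde V = [0,\epsilon]$. This yields a finite chain of minimal models $Y_1 = Y^{(0)}, Y^{(1)}, \dots, Y^{(n)} = Y_2$, one per chamber along the segment. The negativity condition forces each $Y^{(m)}$ to be isomorphic in codimension one to $Y_1$, so every wall-crossing map $Y^{(m)} \dashrightarrow Y^{(m+1)}$ is a small birational modification rather than a divisorial contraction. To identify each such modification as a flop, I would observe that, because everything is done over $Z$, the extremal ray $R$ flipped at each wall is contracted to a point of $Z$, and that $K_{Y^{(m)}}+C^{(m)} \equiv_Z 0$, the $Z$-triviality being preserved under isomorphisms in codimension one via a common resolution with $Y_1$. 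Hence $(K_{Y^{(m)}}+C^{(m)}) \cdot R = 0$, which is exactly the flop condition. Composing these gives the desired sequence from $Y_1$ to $Y_2$.

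The main obstacle I anticipate is the technical point that each intermediate $Y^{(m)}$ admits an actual morphism to $Z$, not merely a rational map extending $g_1$ in codimension one. This should follow from a descent argument on a common resolution $p_1: V \to Y_1$, $p_m: V \to Y^{(m)}$: by the negativity of minimal models, $p_m^*(K_{Y^{(m)}}+C^{(m)})$ agrees with $p_1^*(K_{Y_1}+C_1) = p_1^*g_1^*H$, which is semiample, so its Stein factorization descends to provide the morphism $Y^{(m)} \to Z$, making the $Z$-triviality meaningful.
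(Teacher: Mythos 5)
There is a genuine gap, and it sits exactly at the heart of what the corollary asserts. Your one-parameter family $(Y_1, C_1+sA')$ with $A'=\gamma^{-1}_*A$ for $A$ ample on $Y_2$ has a \emph{trivial} chamber structure: since $\gamma$ is an isomorphism in codimension one, the negativity condition for $\gamma$ is vacuous, and $\gamma_*(K_{Y_1}+C_1+sA')=K_{Y_2}+C_2+sA$ is ample over $Z$ for every $s\in(0,\epsilon]$; hence $(Y_2,C_2+sA)$ is already the (unique) minimal model for every $s>0$. The decomposition of $[0,\epsilon]$ supplied by Theorem~\ref{mini} is therefore just $\{0\}\sqcup(0,\epsilon]$, your ``finite chain'' is $Y_1, Y_2$ with no intermediate models, and the single ``wall crossing'' is the map $\gamma$ itself. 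Declaring that step to be ``a flop'' because it is small and $K$-trivial over $Z$ begs the question: the content of the corollary is precisely to factor $\gamma$ into \emph{elementary} flops (each over a contraction of an extremal ray), and a segment pointing radially from $0$ into the ample chamber of $Y_2$ never sees the intermediate chambers of the movable cone that such a factorization must pass through.

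There is also a second, related gap: even along a segment with nontrivial chamber structure, the chamber decomposition of Theorem~\ref{mini} only \emph{classifies} which models occur where; it does not by itself show that two models on opposite sides of a wall differ by a single extremal flop (the relative Picard number over the wall's canonical model could be larger than one). The paper's proof supplies both missing ingredients differently: it works in the triangle spanned by $0$, $H_1$, $H_2$ with $H_i$ ample on $Y_i$ (so the segment from $H_1$ to $H_2$ genuinely crosses the intermediate chambers), rescales the $H_i$ so that every $\bar V_j$ contains $0$ (which forces $K_{Y_1}+C_1$ to be numerically trivial over each intermediate canonical model, making every flip a flop), and then runs the \emph{MMP with scaling} along that segment --- it is the MMP machinery, not the chamber decomposition, that produces one extremal ray at a time, while the first corollary (finiteness of the $V_j$) guarantees termination. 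To repair your argument you would need to replace the segment $[0,\epsilon]\cdot A'$ by the segment joining $H_1$ to $H_2$ and invoke the MMP with scaling rather than reading the factorization off Theorem~\ref{mini}.
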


We note that the boundary $B$ may not be a $\mathbf{Q}$-divisor. 

\begin{proof}
The $Y_i$ are isomorphic in codimension one by 
Lemma~\ref{exceptional}.
Let $g_i: Y_i \to Z$ be the morphisms to the canonical model.
Let $H_i$ be ample $\mathbf{R}$-divisors on the $Y_i$.
Let $V$ be the triangle spanned by $\mathbf{R}$-divisors $0$, $H_1$ and $H_2$
on $Y_1$, where we use the same symbol for the strict transforms as usual.
We may assume that the $(Y_1,H_i)$ are KLT.

There are finitely many subsets $V_j$ of $V$ corresponding to the 
canonical models by the theorem.
If we replace the $H_i$ by $\epsilon H_i$ for sufficiently small $\epsilon$,
then we may assume that all the $\bar V_j$ contain $0$.
Then the MMP with scaling corresponding to the line segment joining 
$K_{Y_1}+H_1$ and $K_{Y_1}+H_2$ gives the desired sequence of flops.
\end{proof}

We close this paper with some examples in order to illustrate the theorems.

\begin{Expl}
Let $f_0: X_0 \to S$ be a ruled surface with two disjoint sections $S_0$ and 
$S_1$ over a curve, $\alpha_0: X \to X_0$ a 
blowing up at a point on $S_0$, and $f: X \to S$ the composite map.
There is one singular fiber of $f$ which will be denoted by $C_0 + C_1$, where 
$C_0$ is the exceptional curve for $\alpha_0$.

Let $V = \{B = \frac 12 C_0 + tC_1 \,\vert, 0 \le t \le 1\}$.
Then the canonical models for the $(X,B)$ are reduced to the identity to $S$ 
for all $B \in V$.
Let $W_-$, $W_0$ and $W_+$ be the subsets of $V$ defined by equations 
$t < \frac 12$, $t=\frac 12$ and $t > \frac 12$, respectively.
They correspond to minimal models $\alpha_0: X \to X_0$, $X \to X$, and
$\alpha_1: X \to X_1$, the contraction morphism of $C_1$.
\end{Expl}

\begin{Expl}
Let $X^-$ be a Calabi-Yau threefold and let
$\alpha: X^- \to Y \leftarrow X^+$ be a 
flop such that there is a non-canonical isomorphism $h: X^- \cong X^+$ 
as in Example~\ref{223}.
Let $H^-$ be an ample divisor on $X^-$, and let $H^+ = \alpha_*H^-$
be the strict transform which is also considered to be a divisor on $X^-$
by using $h$.
If $\epsilon > 0$ is sufficiently small, then $(X^-,B_t)$ for 
$B_t = \epsilon((1-t)H^- + tH^+)$ with $t \in [0,1]$ are KLT. 
We have a decomposition $V = V^- \coprod V^0 \coprod V^+$, where 
$V = [0,1]$, $V^- = [0,1/2)$, $V^0=\{1/2\}$ and $V^+=(1/2,1]$, corresponding
to the canonical models given by $X^-$, $Y$ and $X^+$.
The chambers $W^-=[0,1/2]$ and $W^+=[1/2,1]$ correspond to minimal 
models $X^-$ and $X^+$.
When $t=1/2$, the canoical model $Y$ has two minimal models $X^{\pm}$.
\end{Expl}

We recall two examples from \cite{CY}~Example~3.8 
in which there are infinitely 
many cones in the space of numerical equivalence classes of 
$\mathbf{R}$-divisors.
In this sense we can say that our theorems deal with only \lq\lq local'' 
situations.

\begin{Expl}
Let $f: X \to S$ be the versal deformation of a singular fiber 
of type $I_2$ on an elliptic surface, i.e.,
a curve $C$ which has two
irreducible components isomorphic to $\mathbf{P}^1$ 
intersecting transversally at two points.
We have $\dim X=3$ and $\dim N^1(X/S) = 2$.

There are coordinates on $N^1(X/S)$ defined by $x = (D \cdot C_1)$ and 
$y=(D \cdot C_2)$ where the $C_i$ are irreducible components of $C$.
The pseudo-effective cone is the half space defined by 
$x+y \ge 0$.
The effective cone is not closed because points $(1,-1)$ and $(-1,1)$ are 
not represented by any effective $\mathbf{R}$-divisors.

The rays generated by $(n+1,-n)$ for $n \in \mathbf{Z}$ divide the 
pseudo-effective cone into infinitely many polytopes which 
correspond to canonical models.
\end{Expl}

\begin{Expl}\label{223}
Let $X$ be a generic hypersurface of degrees $(2,2,3)$ in 
$\mathbf{P}^1 \times \mathbf{P}^1 \times \mathbf{P}^2$.
The pseudo-effective cone is the cone over the convex hull of the points 
$A = (0,0,1)$ and $C_n = (n+1, -n, \frac 32n(n+1))$ for $n \in \mathbf{Z}$, 
because its boundary points correspond to fibrations to 
lower dimensional spaces, i.e., elliptic fibrations and $K3$ fibrations.
The effective cone is closed because the points $A$ and $C_n$ are 
represented by effective divisors.
But it is divided into infinitely many polytopes again.
The projection $X \to S = \mathbf{P}^1 \times \mathbf{P}^1$ induces a 
linear map $N^1(X) \to N^1(X/S)$ given by $(x,y,z) \mapsto (x,y)$,
and the effective cone in the previous example is the image under this map.
\end{Expl}

Department of Mathematical Sciences, University of Tokyo,

Komaba, Meguro, Tokyo, 153-8914, Japan

kawamata@ms.u-tokyo.ac.jp


\begin{thebibliography}{}

\bibitem{BCHM}
Caucher Birkar, Paolo Cascini, Christopher D. Hacon and James McKernan.
{\em Existence of minimal models for varieties of log general type}.
math.AG/0610203.

\bibitem{HM}
Christopher Hacon and James McKernan.
{\em On the existence of flips}.
math.AG/0507597. 

\bibitem{KMM}
Yujiro Kawamata, Katsumi Matsuda and Kenji Matsuki. 
{\em Introduction to the minimal model problem}. 
in Algebraic Geometry Sendai 1985,
Advanced Studies in Pure Math. {\bf 10} (1987), 
Kinokuniya and North-Holland, 283--360. 

\bibitem{crep}
Yujiro Kawamata. 
{\em Crepant blowing-up of 3-dimensional canonical singularities 
and its application to degenerations of surfaces}. 
Ann. of Math. {\bf 127} (1988), 93--163. 

\bibitem{CY}
Yujiro Kawamata. 
{\em On the cone of divisors of Calabi-Yau fiber spaces}.  
alg-geom/9701006, Internat. J. Math. {\bf 8} (1997), 665--687.

\bibitem{flops}
Yujiro Kawamata. 
{\em Flops connect minimal models}.
arXiv:0704.1013.

\bibitem{Nakayama}
Noboru Nakayama.
{\em Zariski-decomposition and abundance}. 
MSJ Memoirs {\bf 14},
Mathematical Society of Japan, Tokyo, 2004. xiv+277 pp. ISBN: 4-931469-31-0.

\bibitem{Shokurov}
V. V. Shokurov. 
{\em $3$-fold log models}. 
Algebraic geometry {\bf 4}, J. Math. Sci. {\bf 81} (1996), no. 3, 2667--2699.

\end{thebibliography}
\end{document}